\newcounter{warn}[page]
\newcommand{\danger}{${\color{red}\triangle}\llap{\raisebox{.3ex}%
{\tiny!\hspace{1.45ex}}}$}
\newcommand{\warning}[1]{%
\raisebox{.01em}[0em]{\danger\ifnum\value{warn} > 1%
\tiny\bf\arabic{warn}\fi}%
\marginpar{\color{red}\tiny{\ifnum\value{warn} > 1\tiny\bf\arabic{warn}:\fi}\tiny #1}%
\stepcounter{warn}}
\newtheorem{theorem}{Theorem}
\newtheorem{definition}[theorem]{Definition}
\newtheorem{corollary}[theorem]{Corollary}
\newtheorem{proposition}[theorem]{Proposition}
\newtheorem{example}[theorem]{Example}
\theoremstyle{remark}
\newtheorem{remark}{Remark}
\renewcommand{\emptyset}{\varnothing}
\newcommand{\Z}{\mathbb{Z}}
\newcommand{\R}{\mathbb{R}}
\newcommand{\C}{\mathbb{C}}
\newcommand{\dd}{\mathrm{d}}
\begin{document}

\title[A note on Lagrangian intersections and Legendrian Cobordism]{A note on Lagrangian intersections \\and Legendrian Cobordism.}
  
\author{Lara Simone Su\'arez}
\address{Ruhr-Universität Bochum, Fakultät für Mathematik, Universitätsstraße 150, 44801 Bochum, Germany}
\email{lara.SuarezLopez@rub.de}




\begin{abstract} Let $\Lambda, \Lambda'$ be a pair of closed Legendrian submanifolds in a closed contact manifold $(Y, \xi = Ker(\alpha))$ related by a Legendrian cobordism $W\subset (\C\times Y, \tilde{\xi}=Ker(-y \d dx +\alpha))$. In this note, we show that in the hypertight setting, if $\Lambda$ intersects a closed, weakly exact or monotone pre-Lagrangian $P\subset Y$ for reasons of Floer homology, then so does $\Lambda'$.
\end{abstract}
\maketitle
\section{Introduction}
Let $(Y,\xi)$ be a co-oriented closed contact manifold. 
In this paper we consider pairs $(P,\Lambda)$ consisting of a closed Legendrian submanifold $\Lambda \subset Y$ and a closed pre-Lagrangian $P\subset Y$ (see Definition \ref{DefPreLag}). A usual question in symplectic topology is that of the displaceability of Lagrangian submanifolds by Hamiltonian isotopy. In contact topology, this question can be interpreted as the search for pairs $(P, \Lambda)$ with the so called \textit{intersection property} as defined by Eliashberg-Polterovich in \cite{ElPo}.
A pair $(P,\Lambda)$ of $(Y,\xi)$ has \textit{the intersection property}, if for every contactomorphism $\phi \in Cont_{0}(Y,\xi)$\footnote{The $0$ here stands for the connected component of the identity.} the intersection $P\cap \phi(\Lambda)$ is non-empty. 
A tool to find such pairs was introduced by Eliashberg-Hofer-Salamon in \cite{EHS}. It is a Floer homology group $HF(P,\Lambda;\Z_2)$ for pairs $(P, \Lambda)$, that is invariant under Legendrian isotopy. In some special cases, this group is isomorphic to the homology of the Legendrian $\Lambda$. This is the case in the following examples from \cite{EHS}, which are the first examples of pairs with the intersection property:
\begin{enumerate}
    \item Let $X$ be a closed manifold and $P^{+}(T^*X)$ the space of co-oriented contact elements with the canonical contact structure. If there is a non-singular closed one form $\beta$ on $X$ then there is a pre-Lagrangian $P$ associated to $ \text{graph}(\beta)$. If moreover, $P$ is foliated by closed Legendrians and $\Lambda$ is any Legendrian leaf, then the pair $(P, \Lambda)$ has the intersection property.
    \item Let $(M,\omega)$ be a closed symplectic manifold with $[\omega]\in H^{2}(M;\Z)$. In a prequantization space $QM$ of $(M,\omega)$, if there is a closed Lagrangian submanifold $L\subset(M,\omega)$ that satisfies the Bohr-Sommerfeld condition and $\pi_2(M,L)=0$ then the corresponding pre-Lagrangian $P$ in $QM$ and a flat Legendrian lift $\Lambda$ of $L$ is a pair with the intersection property. This last example was independently  discovered by Ono \cite{On}.
\end{enumerate}

In \cite{ElPo} Eliashberg-Polterovich also defined the \textit{stable} intersection property of a pair, meaning that in the stabilized setting $(T^*S^1\times Y, Ker(rdt +\alpha))$ the pair $(S^1 \times \{0\}\times P, S^1\times \{0\}\times \Lambda)$ has the intersection property. They showed that the previous examples have the stable intersection property. 
The (stable) intersection property is related to the orderability of the contact manifold. In fact in \cite[Theorem 2.3.A]{ElPo} they proved that having a pair with this property implies that the manifold is orderable.

From another perspective, we can say that in the previous examples, the stabilization preserves the intersection property. In the same way, in this note we remark, in particular for the above examples, that the intersection property is preserved by Legendrian cobordism with the following property: there is a contact form with no contractible Reeb orbits or chords with boundary on the cobordism. This is the \textit{hypertight} property defined below in Definition 2.

The notion of Legendrian cobordism was introduced by Arnol'd (\cite{Ar1}, \cite{Ar2}) who computed the immersed Legendrian cobordism group (oriented and not-oriented) when the immersed Legendrian has dimension $n=1$, the cases $n\geq 2$ were computed by Audin \cite{Aud} and Eliashberg \cite{El} independently.  Afterwards, related works on Legendrian cobordisms where done by Ferrand \cite{Fe} and more recently by Limouzineau \cite{Lim}.

An embedded Legendrian cobordism in $J^1(\R\times N)$, projects to an immersed exact Lagrangian cobordism in $T^*N$. Biran-Cornea \cite{BiCo} showed that monotone embedded Lagrangian cobordism preserves Floer-theoretical Lagrangian invariants after Chekanov \cite{Che} showed how these preserve certain counts of Maslov two disks. 

In general, embedded Legendrian cobordism does not preserve holomorphic curves type invariants. This is the case, for instance of the linearized contact homology, a Legendrian analogue of the Lagrangian Floer homology. In $(\R^3,dz-pdq)$ for example, it follows from the work of Arnol'd that two Legendrian knots are oriented Legendrian cobordant if and only if they have the same Maslov index. Hence any two knots with same Maslov index but different linearized contact homology are still Legendrian cobordant.

However, under additional restrictions, we show that Legendrian cobordism does preserve the Floer homology group of Eliashberg-Hofer-Salamon. More precisely we prove the following:

\begin{theorem}
Let $(Y,\xi)$ be a closed hypertight contact manifold and $\Lambda, \Lambda'\subset Y$ a pair of hypertight Legendrian submanifolds related by a connected hypertight Legendrian cobordism $(W;\Lambda, \Lambda')\subset (\C \times Y,\tilde{\xi})$.  If $P\subset Y$ is a closed weakly exact or monotone  with $N_P \geq 2$ pre-Lagrangian submanifold 
then $$HF(P,\Lambda) \cong  HF(P,\Lambda').$$
\end{theorem}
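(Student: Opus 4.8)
The plan is to adapt the monotone Lagrangian cobordism argument of Biran--Cornea \cite{BiCo} to the hypertight contact setting, working directly inside the ambient contact manifold $(\C\times Y,\tilde\xi)$ rather than passing to a symplectization. The starting observation is that if $\gamma\subset\C$ is a properly embedded curve, then $\gamma\times P$ is again a pre-Lagrangian of $(\C\times Y,\tilde\xi)$: since $\gamma$ is one--dimensional one has $dx\wedge dy|_{T(\gamma\times P)}=0$, and together with $d\alpha|_{TP}=0$ this gives $d(-y\,dx+\alpha)|_{T(\gamma\times P)}=0$, while the Bohr--Sommerfeld/lift condition for $P$ (Definition \ref{DefPreLag}) propagates to $\gamma\times P$ for a linear $\gamma$. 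As $\dim(\gamma\times P)=n+2$ and $\dim W=n+1$ form the correct pre-Lagrangian/Legendrian pair in the $(2n+3)$--manifold $\C\times Y$, the Eliashberg--Hofer--Salamon group $HF(\gamma\times P,W)$ is defined by exactly the construction of \cite{EHS} that produces $HF(P,\Lambda)$, \emph{provided} its moduli spaces are compact. The first step is therefore to fix a hypertight form on $\C\times Y$, an admissible almost complex structure, and to check that $HF(\gamma\times P,W)$ is well defined; this is where all three hypertightness hypotheses and the condition on $P$ will be used.

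Next I would pin down the geometry so that the two ends of $W$ become visible to $\gamma\times P$. Arrange $W$ so that outside a compact set $W_0$ it is cylindrical, equal to $\ell_-\times\Lambda$ over a horizontal ray at height $a_-$ lying over $x\in(-\infty,-R]$, and to $\ell_+\times\Lambda'$ over a horizontal ray at height $a_+$ lying over $x\in[R,\infty)$. A direct computation from $d(-y\,dx+\alpha)=dx\wedge dy+d\alpha$ shows that the Reeb field of $-y\,dx+\alpha$ is $R_\alpha$ in the $Y$--direction at constant $(x,y)$, so the generators of $CF(\{x=c\}\times P,\,W)$ are Reeb chords of $(Y,\alpha)$ sitting over the single crossing point of the vertical line $\{x=c\}$ with the ray it meets. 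For $c<-R$ this crossing lies over $(c,a_-)$ and the chords are precisely those between $P$ and $\Lambda$, so $HF(\{x=c\}\times P,W)\cong HF(P,\Lambda)$; for $c>R$ it lies over $(c,a_+)$ and one gets $HF(\{x=c\}\times P,W)\cong HF(P,\Lambda')$.

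The third step is to let $c$ vary from $-\infty$ to $+\infty$. Since $\{x=c\}\times P$ is an isotopy of admissible test pre-Lagrangians, the usual continuation argument shows that $HF(\{x=c\}\times P,W)$ is independent of $c$, and comparing the two extreme values yields $HF(P,\Lambda)\cong HF(P,\Lambda')$. The role of the hypothesis that $W$ is \emph{connected} with exactly the two ends $\Lambda,\Lambda'$ is to guarantee that this continuation across the compact region $W_0$ is a genuine isomorphism, exactly as in the two--ended case of \cite{BiCo}: the curves realising the continuation travel along the single connected cobordism, and the only possible correction terms are the monotone disk counts controlled by Chekanov \cite{Che}, which cancel because the two families of chords are joined through $W_0$. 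A disconnected or many--ended $W$ would instead produce an iterated mapping cone, which is why these hypotheses are imposed.

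The main obstacle, and the entire content beyond the above formal scheme, is compactness of the relevant moduli spaces in the non-compact manifold $\C\times Y$. Two bubbling phenomena must be excluded simultaneously. First, SFT--type degenerations: holomorphic planes or strips could break off along closed Reeb orbits of $Y$ or along Reeb chords with boundary on $W$; hypertightness of $(Y,\xi)$, of $\Lambda$ and $\Lambda'$, and of $W$ forbids all contractible orbits and chords and rules these out. Second, Lagrangian disk bubbling on $\gamma\times P$: this is the classical difficulty, controlled by the weakly exact or monotone assumption with $N_P\geq 2$, so that no Maslov index $\leq 0$ disks occur and no disk bubble can appear in the index one and index two strata governing the differential, the continuation maps, and their chain homotopies. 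Making the contact (hypertight) and symplectic (monotone) compactness mechanisms cooperate uniformly in the family of test objects $\{x=c\}\times P$ is the technical heart of the argument; working over $\Z_2$ removes all orientation bookkeeping.
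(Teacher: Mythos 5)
Your proposal follows essentially the same route as the paper: you sweep a test pre-Lagrangian of the form (curve in $\C$) $\times\, P$ across the cobordism, identify the Floer complex over each cylindrical end with that of $(P,\Lambda)$ resp.\ $(P,\Lambda')$, and conclude by invariance of the homology under the translation isotopy, with hypertightness plus the weak exactness/monotonicity of $P$ (and an almost complex structure for which the projection to $\C$ is holomorphic at infinity) supplying compactness --- this is precisely the paper's construction of $CF(L_P,L_W;H,f,\mathbf{J})$, its proposition on invariance under admissible Hamiltonian isotopies, and the final $x$-translation argument, with your vertical lines $\{x=c\}\times P$ playing the role of the paper's $f$-perturbed copy of $\R\times\{0\}\times P$. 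Two caveats: your claim to work ``directly in the contact manifold rather than passing to a symplectization'' is illusory, since the Eliashberg--Hofer--Salamon homology you invoke is itself defined via Lagrangian lifts in the symplectization of $\C\times Y$ (which is where the paper, and any complete version of your argument, must carry out the compactness analysis and also show that strips asymptotic to generators over a crossing point cannot wander over the rest of the cobordism, so that the differentials, not merely the generators, match); and your explanation of the connectedness hypothesis via iterated mapping cones and cancelling Chekanov-type disk counts does not correspond to any mechanism in the argument --- the sweep gives an isomorphism by admissible Hamiltonian invariance alone, with no correction terms to cancel.
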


The proof of Theorem 1 is an adaptation to the cobordism setting of \cite[Theorem 3.7.3]{EHS}. We define a Floer complex for the Lagrangian lift of a Legendrian cobordism $(W;\Lambda,\Lambda')$ and a suitable cylindrical Lagrangian $\tilde{P}$ obtained from $P$. We then observe that the homology of this complex is isomorphic to the homology of the Floer complex of the Legendrian boundary $\Lambda$ (or $\Lambda'$) and the pre-Lagrangian $P$. This proof uses a similar strategy to the one used by Biran-Cornea \cite{BiCo} to show that monotone Lagrangian cobordism preserves Floer homology. Combining Theorem 1 with \cite[Theorem 2.5.1-2.5.4]{EHS} (see section 2.2 Theorem 11) we get the following corollary.
\begin{corollary}
Under assumptions in Theorem 1, consider a pair $(P, (W;\Lambda, \Lambda'))$ where $P$ is weakly exact and $\Lambda \subset P$.  If moreover boundary homomorphism $\pi_{2}(Y,P)\to  \pi_1(P)$ is trivial, then if $\{\phi_t\}_{0\leq t\leq 1}$ is a contact isotopy such that $P \pitchfork \phi^1(\Lambda')$. Then  $$\#\phi_{1}(\Lambda') \cap P \geq  rank(H_*(\Lambda, \Z_2)).$$
\end{corollary}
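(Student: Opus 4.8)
The plan is to assemble a chain of isomorphisms linking the homology of $\Lambda$ to the Floer homology of the displaced pair, and then to read off the intersection bound from the fact that a transverse intersection is exactly a basis of the Floer chain complex. Concretely, I would aim to establish
$$H_*(\Lambda;\Z_2) \cong HF(P,\Lambda) \cong HF(P,\Lambda') \cong HF\bigl(P,\phi_1(\Lambda')\bigr),$$
and then exploit that the last group is the homology of a complex whose generators are the transverse intersection points $P \cap \phi_1(\Lambda')$, so that the count of those points dominates the rank of the homology.

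First I would invoke the Eliashberg--Hofer--Salamon computation recorded as Theorem 11 in Section 2.2 (that is, \cite[Theorem 2.5.1-2.5.4]{EHS}). The hypotheses that $P$ is weakly exact, that $\Lambda \subset P$, and that the boundary homomorphism $\pi_2(Y,P)\to\pi_1(P)$ is trivial are precisely what is needed to suppress the disc and chord contributions that would otherwise deform the Floer differential, yielding the first isomorphism $HF(P,\Lambda)\cong H_*(\Lambda;\Z_2)$. This is the analogue of example (1) of the introduction, where $\Lambda$ is a Legendrian leaf of $P$. Next, Theorem 1 applies directly to the pair and the connected hypertight cobordism $(W;\Lambda,\Lambda')$, giving $HF(P,\Lambda)\cong HF(P,\Lambda')$. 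To bring in the contact isotopy I would use the invariance of $HF(P,-)$ under Legendrian isotopy stated in the introduction: since each $\phi_t\in Cont_0(Y,\xi)$ is a contactomorphism, the family $\{\phi_t(\Lambda')\}_{0\le t\le 1}$ is a Legendrian isotopy from $\Lambda'$ to $\phi_1(\Lambda')$, whence $HF(P,\Lambda')\cong HF(P,\phi_1(\Lambda'))$.

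Finally, because $P \pitchfork \phi_1(\Lambda')$ by assumption, no perturbation of the generators is required: the Floer chain complex $CF(P,\phi_1(\Lambda'))$ is freely generated over $\Z_2$ by the intersection points $P\cap\phi_1(\Lambda')$. Since homology is a subquotient of the chain group, the Morse-type inequality gives
$$\#\bigl(\phi_1(\Lambda')\cap P\bigr) = \dim_{\Z_2} CF\bigl(P,\phi_1(\Lambda')\bigr) \ge \dim_{\Z_2} HF\bigl(P,\phi_1(\Lambda')\bigr) = \mathrm{rank}\,H_*(\Lambda;\Z_2),$$
which is exactly the desired estimate once the chain of isomorphisms above is composed.

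I expect the main obstacle to lie in justifying that the stated hypotheses genuinely place us in the regime where the first isomorphism holds. One must verify that weak exactness of $P$ together with the triviality of $\pi_2(Y,P)\to\pi_1(P)$ rules out both the holomorphic discs and the non-constant Reeb chords that could contribute to the differential, so that $HF(P,\Lambda)$ collapses to the full homology $H_*(\Lambda;\Z_2)$ rather than to a proper subquotient. A secondary point requiring care is compatibility of conventions: one should check that the grading and $\Z_2$-coefficient setup underlying Theorem 1 agrees with that of the EHS computation and of the contact-isotopy invariance, so that the three isomorphisms may be composed without index ambiguity.
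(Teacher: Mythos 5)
Your proposal is correct and follows exactly the route the paper intends: the paper offers no detailed proof, only the remark that the corollary follows by ``combining Theorem 1 with \cite[Theorem 2.5.1-2.5.4]{EHS}'', and your chain of isomorphisms $H_*(\Lambda;\Z_2)\cong HF(P,\Lambda)\cong HF(P,\Lambda')\cong HF(P,\phi_1(\Lambda'))$ together with the rank inequality from the transversely-generated chain complex is precisely that combination made explicit. Your closing caveats (ruling out disc/chord contributions via the hypotheses, and matching coefficient conventions between Theorem 1 and the EHS computation) are the right points to watch, but they do not change the argument.
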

\textit{Acknowledgements.}
I thank Egor Shelukhin for suggesting this project and helpful discussions. This research is supported by the Floer Centre of Geometry at Ruhr-University Bochum and is part of a project in the SFB/TRR 191 \textit{Symplectic Structures in Geometry, Algebra and Dynamics}, funded by the DFG.

\section{Setting}
Let $(Y,\xi)$ be a closed co-oriented contact $2n+1$-manifold. Denote by $\text{Cont}^{+}(\xi)$ the set of contact forms defining the same co-orientation on $\xi$.

For a choice of $\alpha\in\text{Cont}^{+}(\xi)$, the symplectic manifold $(S_{\alpha}(Y),\omega):=(\R\times Y, \dd(e^\theta \alpha))$ is called the \textit{symplectization} of $(Y, \alpha)$.

Given $\eta\in \text{Cont}^{+}(\xi)$, its \textit{Reeb vector field} is the unique vector field $R_{\eta}\in \Gamma(TY)$ satisfying the two equations: $$\iota_{R_{\eta}}\eta = 1,$$ 
$$\iota_{R_{\eta}}\dd\eta = 0.$$ Its associated flow is denoted by $\{\phi^t_{R_{\eta}}\}$ and it is called the \textit{Reeb flow}.

\begin{definition} A contact manifold $(Y, \xi)$ that admits a contact form with no contractible periodic Reeb orbits is called \textbf{hypertight}. Such a contact form is called a \textbf{hypertight contact form}. A Legendrian submanifold $\Lambda\subset Y$ for which there is a hypertight contact form such that any Reeb chord with boundary on $\Lambda$ represents a non-trivial class in $\pi_{1}(Y, \Lambda)$ is called a \textbf{hypertight Legendrian} submanifold.
\end{definition}
Some examples of hypertight contact manifolds are jet spaces, certain pre-quantization spaces and certain unit cotangent bundles, with their corresponding standard contact structures.

In this note \textbf{all} contact manifolds and Legendrians are assumed to be hypertight.

\begin{definition}\label{DefPreLag}A \textbf{pre-Lagrangian} is a $(n+1)$-dimensional submanifold $P\subset Y^{2n+1}$ for which there exists a contact form $\alpha \in Cont^{+}(\xi)$ such that $\dd(\alpha\vert_{P}) = 0$ i.e $\alpha\vert_{P}$ is closed.
\end{definition} 
If there is a function $f:P \rightarrow \R$ such that $\alpha|_{P} = \dd f$ then $P$ is called a \textit{exact pre-Lagrangian} and $f$ is called \textit{contact potential} on $P$.
An exact Lagrangian lift $\hat{P}$ of an exact pre-Lagrangian is given by $$\hat{P}=\{0\}\times P\subset \R\times Y. $$
Note that $\hat{f}:\hat{P}\rightarrow \R$, $\hat{f}(0,p)=f(p)$ is a primitive for $e^{\theta}\alpha|_{\hat{P}}$.

A pre-Lagrangian  $P$ is called \textit{weakly exact} when $\dd \alpha|_{\pi_2(Y,P)}=0$ and \textit{monotone} if there is a positive constant $K$ such that for all $u\in \pi_2(Y,P)$ we have $$\int_{\partial u} \alpha = K \mu(u)$$ where $\mu$ denotes the Maslov class $\mu:\pi_2(Y,P)\to\Z$. The positive generator of the image of the homomorphism defined by $\mu$ is denoted $N_P$ and is called the minimal Maslov number of $P$.
\begin{example}Let $(Y,\xi) = (S(T^*X),\text{ Ker}(p\dd q|_{S(T^*X)}))$ where the right hand side is the unit cotangent bundle of $(X,g)$ with respect to some choice of a Riemannian metric $g$. Let $\eta$ be a nowhere vanishing closed 1-form. If $\Gamma_{\eta}\subset T^*X$ denotes the graph of $\eta$, then $\Gamma_{\eta/||\eta||} \subset S(T^*X)$ is a pre-Lagrangian for the contact form $||\eta||p\dd q$.
\end{example}
\begin{example}Let $(Y,\xi) = QX$ be the prequantization of a symplectic manifold $(X,\omega)$. Topologically $Y$ is an $S^1$-principal bundle over $X$ with projection map $p:Y\rightarrow X$. Given a Lagrangian submanifold $L\subset X$ its lift  $p^{-1}(L)\subset(Y,\xi)$ is a pre-Lagrangian.
\end{example}

\subsection{Legendrian cobordism.}
Let $(x,y)$ denote coordinates on $\C$. Then $\tilde{\alpha}= -y\dd x + \alpha$ is a contact form on $\C\times Y$. Denote the resulting contact manifold by  $$(\tilde{Y}, \tilde{\xi})= (\C \times Y, Ker(\tilde{\alpha})).$$ Let $\pi_{\C}: \C\times Y \rightarrow \C$ denote the projection map. Given $\Lambda, \Lambda'\subset Y$ two closed Legendrian submanifolds, in \cite{Ar1} and \cite{Ar2} Arnol'd introduced the notion of Legendrian cobordism on which the following definition is based.
\begin{definition}{A \textbf{Legendrian cobordism} $(W; \Lambda, \Lambda')$ is an embedded Legendrian submanifold $W\subset (\tilde{Y},\tilde{\xi})$ such that for some $0 < \epsilon \in \R$  and $R\geq 1$ we have that $$W \cap \pi_{\C}^{-1}([\epsilon, R-\epsilon] \times \R)$$ is a smooth compact manifold with boundary $\Lambda \sqcup \Lambda'$ and $$W \cap \pi_{\C}^{-1}(\C \setminus ([\epsilon, R-\epsilon] \times \R)) = (-\infty, \epsilon)\times \{0\} \times \Lambda\sqcup (R-\epsilon, \infty)\times \{0\} \times \Lambda'.$$}
\end{definition}

\begin{example}(Legendrian suspension)\cite{ElPo} Let $H: \R \times Y \rightarrow \R$ be a contact Hamiltonian with $H_{t}\equiv0$ for $t\leq 0$ and $t\geq 1$. The associated contact vector field $X_{H}$ is given by the two conditions: $\iota_{X_{H}}\alpha = H$ and $ \iota_{X_{H}}\dd \alpha = -\dd H + (\iota_{R_{\alpha}}\dd H )\alpha$.
Denote by $\{\psi^{t}\}$  the contact isotopy associated to $X_{H}$. Let $\Lambda \subset M$ be a compact Legendrian.
The map:
\begin{align*}
\Phi: \R \times\Lambda  &\rightarrow \C \times Y \\ (t,p) &\mapsto (t, H(t,\psi^{t}(p)), \psi^{t}(p)),
\end{align*}
is a Legendrian embedding into $(\C \times M, Ker(\tilde{\alpha}))$ defining a Legendrian cobordism $$(\Phi(\R\times \Lambda);\Lambda, \psi^{1}(\Lambda)).$$
\end{example}
\begin{remark}Any Legendrian isotopy  defines a Legendrian cobordism given by the Legendrian suspension of a contact Hamiltonian generating the isotopy. 
\end{remark}

\begin{example}The trace of surgery: Elementary Legendrian cobordisms can be constructed using the Lagrangian handle in Haug \cite{Hau} building on the work of Dimitroglou Rizell \cite{Di}. This construction produces an embedded Legendrian cobordism in $J^{1}(\R^{n+1})$. 

The Legendrian $k$-handle: Consider the non compact Legendrian $W_{\epsilon, k}\subset J^{1}(\R^{n+1})$ given by $$W_{\epsilon, k}= \{(x_{0},\textbf{x}, \pm \dd F(x_{0},\textbf{x}), \pm F(x_{0}, \textbf{x}))\in J^{1}(\R^{n+1}) \hspace{0.2cm}\vert\hspace{0.2cm} (x_{0},\textbf{x}) \in \mathcal{U}\}$$ for $\textbf{x}= (x_{1}, ...,x_{n}) \in \R^{n}$ where $F(x_{0}, \textbf{x})= (f(x_{0}, \textbf{x}))^{\frac{3}{2}}$,  $$f(x_{0}, \textbf{x})=\sum \limits_{i=1}^{k} x_{i}^{2} -  \sum \limits_{i=k+1}^{n} x_{i}^{2} + \sigma(\sum\limits_{i=1}^{k} x_{i}^{2})\rho(x_{0}) -1$$
and $\mathcal{U}= \{(x_{0}, \textbf{x})\hspace{0.2cm}\vert\hspace{0.2cm} f(x_{0}, \textbf{x}) \geq 0\}$, where the functions $\sigma$ and $\rho$ look like:

\hspace{2cm}
\includegraphics[scale=0.5]{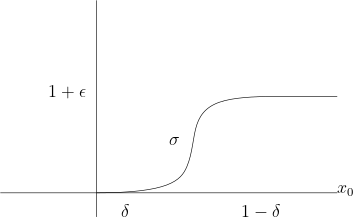}\hspace{2cm}
\includegraphics[scale=0.5]{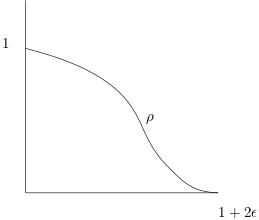}\\
In the case a Legendrian $\Lambda$ contains a $l$-sphere $S^l\subset \Lambda$ which bounds an isotropic $l+1$-disk $D^{l+1} \subset Y\setminus \Lambda$, compatible with the Legendrian in some sense made precise in \cite{Hau}, after surgery a new Legendrian is obtained, and an embedded Legendrian cobordism between the initial Legendrian and the one produced after the surgery exists.
\end{example}
\begin{example}Let $(M, \lambda)$ be a Liouville domain. The Legendrian lift of an immersed exact Lagrangian cobordism $(W;L_0, L_1) \subset (\C \times M, \dd (-y\dd x + \lambda))$ to the contactization of $\C\times M$, defined by $\C\times C(M,\lambda)=(\C\times M\times \R_z, -y\dd x + \lambda+ \dd z )$. More precisely, if $f:W \to \R$ is a potential for the Lagrangian $W$, the Lagrangian $\tilde{W}:=\{(w,f(w)) | w\in W\}$ is an embedded Legendrian cobordism in $\C \times C(M,\lambda)$.
\end{example}


\subsection{Previous results}
We denote by $HF(P,\Lambda;\Z_{2})$ the homology of the complex defined by Eliashberg-Hofer-Salamon in \cite{EHS}. In  \cite[Theorem 3.7.3]{EHS} they showed that in the hypertight setting, this homology group is well defined and invariant under compactly supported Hamiltonian isotopy. 
Let $(Y,\xi)$ be as in Examples (1) or (2) in the introduction, namely, either (1) a sphere cotangent bundle of a compact manifold or (2) a prequantization space. Assume that the contact manifold and pre-Lagrangian submanifold satisfy the topological condition that the boundary map $\pi_2(Y,P)\to \pi_1(P)$ is trivial. In this setting, when the pair $(P,\Lambda)$ is either the graph of a non-zero closed form foliated by closed Legendrians and a Legendrian leaf of it in the first case, or the pre-Lagrangian lift of a Lagrangian satisfying the Bohr-Sommerfeld condition with the property that $\pi_2(Y,P)=0$ and a Legendrian lift of it, then the following theorem holds.
\begin{theorem}{\cite{EHS}[1995]\label{thm=EHS}
Let $\{\phi_t\}_{0\leq t\leq 1}$ be a contact isotopy and $\Lambda$ a Legendrian such that $P \pitchfork \phi^1(\Lambda)$. Then  $$\#\phi_{1}(\Lambda) \cap P \geq  rank(H_*(\Lambda, \Z_2)).$$
}
\end{theorem}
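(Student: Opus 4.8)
The plan is to combine three ingredients: the invariance of $HF(P,\Lambda;\Z_2)$ under Legendrian isotopy of $\Lambda$, the elementary Morse-type bound relating the rank of a Floer homology to the number of generators of its chain complex, and the identification of $HF(P,\Lambda;\Z_2)$ with $H_*(\Lambda;\Z_2)$ in the two geometric situations at hand.

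First I would note that, since $\{\phi_t\}_{0\leq t\leq 1}$ is a contact isotopy, the Legendrian $\phi_1(\Lambda)$ is Legendrian isotopic to $\Lambda$ through the family $\{\phi_t(\Lambda)\}$. By the invariance recalled above (from \cite{EHS}), this gives $HF(P,\phi_1(\Lambda);\Z_2)\cong HF(P,\Lambda;\Z_2)$. Since $P\pitchfork\phi_1(\Lambda)$ and $\dim P+\dim\Lambda=(n+1)+n=\dim Y$, the intersection $\phi_1(\Lambda)\cap P$ is a finite set of transverse points, and these are exactly the generators of the chain complex $CF(P,\phi_1(\Lambda);\Z_2)$. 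Because the rank of the homology of a finitely generated complex over a field cannot exceed its number of generators, we obtain
$$\#\bigl(\phi_1(\Lambda)\cap P\bigr)\;\geq\;\mathrm{rank}\,HF(P,\phi_1(\Lambda);\Z_2)\;=\;\mathrm{rank}\,HF(P,\Lambda;\Z_2).$$

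The substantive step is the identification $HF(P,\Lambda;\Z_2)\cong H_*(\Lambda;\Z_2)$, which holds in the two geometric cases precisely because there $\Lambda$ sits inside $P$ --- as a closed Legendrian leaf of the foliation in the first case, and as a flat Legendrian lift of a Bohr--Sommerfeld Lagrangian in the second. I would put $\Lambda$ and $P$ in a clean (Morse--Bott) position and perturb using a small Morse function $h$ on $\Lambda$, so that the perturbed generators of $CF(P,\Lambda)$ are in bijection with the critical points of $h$. Hypertightness forbids contractible Reeb chords with ends on $\Lambda$, while weak exactness (or monotonicity with $N_P\geq 2$) together with the triviality of $\pi_2(Y,P)\to\pi_1(P)$ excludes the low-index disk and strip bubbling; consequently the only rigid holomorphic strips are the gradient trajectories of $h$, so the Floer differential equals the Morse differential and $HF(P,\Lambda;\Z_2)\cong H_*(\Lambda;\Z_2)$.

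Combining the two facts yields $\#(\phi_1(\Lambda)\cap P)\geq \mathrm{rank}\,H_*(\Lambda;\Z_2)$, as desired. The main obstacle is precisely this last identification: one must establish compactness of the strip moduli spaces (the hypertight condition rules out breaking along contractible Reeb orbits and chords, which is what bypasses the usual failures of exactness or monotonicity), achieve transversality for a generic almost complex structure, and verify via action and Maslov-index bookkeeping that no bubbling contributes in index one. Once the differential is reduced to the Morse differential on $\Lambda$, the homology computation and the final inequality follow immediately.
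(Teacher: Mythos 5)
The paper does not actually prove this statement---it is quoted as a known result of Eliashberg--Hofer--Salamon \cite{EHS}---and your outline reconstructs precisely the argument of that source: invariance of $HF(P,\Lambda;\Z_2)$ under Legendrian/contact isotopy, the elementary bound $\#\bigl(\phi_1(\Lambda)\cap P\bigr)\geq \mathrm{rank}\,HF(P,\phi_1(\Lambda);\Z_2)$ coming from the fact that transverse intersection points generate the chain complex, and the Morse--Bott identification $HF(P,\Lambda;\Z_2)\cong H_*(\Lambda;\Z_2)$ when $\Lambda\subset P$, with hypertightness ruling out breaking along Reeb chords/orbits and weak exactness (or monotonicity with $N_P\geq 2$) plus the triviality of $\pi_2(Y,P)\to\pi_1(P)$ ruling out bubbling. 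This is essentially the same approach as the cited proof, so there is nothing to correct.
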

In the same direction Akaho \cite{Ak}, proved a version of the previous theorem replacing the hypertight condition by a small energy condition.
Results about displaceability of pre-Lagrangians can also be found in the work of Marinkovic-Pabiniak \cite{MaPa}.

\section{A Floer complex for the pair $(P, W)$.}
The version of Floer complex associated to the pair $(P,W)$, where $P \subset Y$ is a weakly exact or monotone pre-Lagrangian (with $N_P\geq 2$) and $(W,\Lambda, \Lambda')\subset \C\times Y$ is a Legendrian cobordism, is an adaptation to the cobordism setting (following Biran and Cornea in \cite{BiCo}) of the Floer complex as defined in \cite{EHS}. 

Let $\tilde{\alpha} \in Cont^+(\tilde{\xi})$ be a contact form on $\C \times Y$ that can be written as $\tilde{\alpha}= -y\dd x+\alpha$ where $\alpha\in Cont^+(\xi)$. We assume that $\tilde{\alpha}$ satisfies:
\begin{itemize}
\item $P$ is a pre-Lagrangian for $\alpha$, i.e $d\alpha|_{P}=0$.
\item $W$ is hypertight for $\tilde{\alpha}$.
\end{itemize}
To the pair $(P,W)$ we associate the pair $(L_P,L_W)$ in $(S(\tilde{Y}), \tilde{\omega} =\dd (e^{\theta}\tilde{\alpha}))$, where $L_{P}=\{0\}\times \R\times\{0\}\times P$ denotes the Lagrangian lift of the pre-Lagrangian $\tilde{P}=\R\times\{0\}\times P \subset \C\times Y$ and $L_{W} = \R\times W$ is the Lagrangian lift of $W$. 

\subsection{Choices}\label{chice}
The Floer complex will depend on the choice of triples $(H,f,J)$ where:
\begin{enumerate}
    \item Denote by $\pi_{\C}:\C \times Y\rightarrow \C$ the projection. Let us fix a compact set $B\subset \C$ of the form $[R_-,R_+] \times [R_-,R_+]$ for $R_{\pm}\in \R$ big enough, such that $\pi_{\C}(W) \cap \C\setminus B$ is $$(-\infty,R_-]\times \{0\} \cup [R_+, \infty)\times \{0\}.$$
    \item A Hamiltoninan $H:[0,1]\times S(\tilde{Y})\rightarrow \R $ compactly supported in $\R\times \pi^{-1}_{\C}(B)\subset S(\tilde{Y})$. We use the Hamiltonian flow of $H$ to perturb $L_P$ to make it transverse to $L_W$ on $\R\times \pi^{-1}_{\C}(B)\subset S(\tilde{Y})$. 
    \item A smooth function $f:\C \rightarrow  \R$  with the property that 
    $$f(x,y) = a_+ x + b_+ \text{ for } x<R_-  - \epsilon \text{ and }f(x,y) = a_- x + b_- \text{ for } x> R_+ + \epsilon$$ where $a_\pm, b_{\pm}\in \R$ and $\epsilon >0$. We consider the Hamiltonian $\tilde{f} = e^{\theta} (f\circ \pi_{\C})$ on $S(\tilde{Y})$. It is the Hamiltonian lift of the contact Hamiltonian $f\circ \pi_{\C}$ to $S(\tilde{Y})$. 
    The corresponding Hamiltonian flow $\psi_{\tilde{f}}^t:S(\tilde{Y})\rightarrow S(\tilde{Y})$ is given by $$(\theta,x,y,p)\in \R\times \C\times Y \mapsto \psi_{\tilde{f}}^t(\theta,x,t,p)= (\theta,x,y+a_{\pm}t,\phi_{R_{\alpha}}^{a_{\pm}xt}(p)),$$
    for $(\theta,x,y,p)$ outside $\R\times B\times Y$.
    This Hamiltonian flow perturbs the cylindrical ends of $L_{P}$ in such a way that the time one map $\psi^{1}_{\tilde{f}}(L_{P})\cap L_W = \emptyset$ outside $\R\times B \times Y$. All this while keeping the contact form invariant at infinity. 
    \item $J$ is an admissible $\tilde{\omega}$-compatible almost complex structure on $S(\tilde{Y})$. Admissible means a cylindrical and $\dd \tilde{\alpha}$-compatible almost complex structure for which $d\pi$ is $(J,j)$-holomorphic where $\pi: (S(\tilde{Y}), J)\to (\C,j)$ denotes the projection and $j$ denotes the complex structure on $\C$. We define admissible in more detail in the next section. \end{enumerate}

Let  
$$\mathbf{\Lambda} = \{\sum_{k=0}^{\infty} a_k T^{\lambda_k} \hspace{0.2cm}|\hspace{0.2cm} a_k \in \Z_{2}, \lambda_k \in \R, \lim_{k\rightarrow\infty}\lambda_{k}=\infty\}. $$
The $\star$-Floer complex of the pair $(L_{P},L_W)$ is denoted by
\begin{equation}\label{eq:Floer complex}
CF_{\star}(L_P,L_W; H,f,J):= (\mathbf{\Lambda}\langle \hat{\mathcal{I}}_{\star}(\psi^{1}_{\tilde{f}}(L_P),L_W; H)\rangle, d_J).
\end{equation}
The set $\hat{\mathcal{I}}_{\star}(\psi^{1}_{\tilde{f}}(L_{P}),L_W; H)$ is constructed as follows:

Consider the set $\mathcal{P}(\psi^{1}_{\tilde{f}}(L_P), L_W) := \{\gamma\in C^{0}([0,1];S(\tilde{Y})) \hspace{0.2cm}|\hspace{0.2cm} \gamma(0)\in \psi^{1}_{\tilde{f}}(L_P), \gamma(1)\in L_W\}$ of paths between $\psi^{1}_{\tilde{f}}(L_P)$ and $L_W$. For a fix $* \in \psi^{1}_{\tilde{f}}(L_P)\cap L_W$ denote by $\star=[*]\in \pi_{0}(\mathcal{P}(\psi^{1}_{\tilde{f}}(L_P), L_W))$ and let $\mathcal{P}_{\star}(\psi^{1}_{\tilde{f}}(L_P), L_W)$ be the connected component corresponding to the class $\star$. When $\psi^{1}_{\tilde{f}}(L_P)\cap L_W$ is not transverse, we use the Hamiltonian $H$. Denote by $$\mathcal{I}_{\star}(\psi^{1}_{\tilde{f}}(L_P),L_W; H) := \{ x\in  \psi^1_H(\psi^{1}_{\tilde{f}}(L_P))\cap L_W \hspace{0.2cm}|\hspace{0.2cm} x\in \mathcal{P}_{\star}(\psi^{1}_H(\psi^{1}_{\tilde{f}}(L_P)), L_W)\}$$ the intersection points in the connected component of $\star$. Then
\begin{align*}
    &\hat{\mathcal{I}}_{\star}(\psi^{1}_{\tilde{f}}(L_P),L_W; H) :=\\
    &\{(x,\tilde{x}) \in \mathcal{I}_{\star}(\phi^{1}_{\tilde{f}}(L_P),L_W; H) \times C^0([0, 1],\mathcal{P}_{\star}\Big(\psi^{1}_H(\psi^{1}_{\tilde{f}}(L_P)), L_W\Big)) | \tilde{x}(0)= x, \tilde{x}(1)= * \}/\sim
\end{align*}
where $(x,\tilde{x})\sim (x',\tilde{x}')$ iff $x = x'$ and $\mu(\tilde{x}\#(\overline{\tilde{x}'}))=0$ where $\mu$ denotes the Maslov index and $\overline{\tilde{x}'}$ denotes the same map with opposite orientation. Then the elements of $\hat{\mathcal{I}}_{\star}(\psi^{1}_{\tilde{f}}(L_P),L_W; H)$ have a well defined index $|[(x,\tilde{x})]| = \mu(\tilde{x}) \in \Z$.

 By choosing suitable coefficients $a_\pm$ for the perturbation $f$ we can ensure that $\psi^{1}_{\tilde{f}}(L_P)\cap L_W$ is contained in a bounded region. Then perturbing by a generic, compactly supported Hamiltonian $H$ will ensure that $ \psi^1_H(\psi^{1}_{\tilde{f}}(L_P)) \cap L_W$ is a finite set and the intersection is transverse making the set $\mathcal{I}_{\star}(\psi^{1}_{\tilde{f}}(L_P),L_W; H)$ finite.
 
 Let $\mathbf{J}=\{J_t\}_{t\in [0,1]} $ be a time dependent almost complex structure and let $\hat{x}_{\pm}= [(x_{\pm}, \tilde{x}_{\pm})] \in \hat{\mathcal{I}}_{\star}(\psi^{1}_{\tilde{f}}(L_P),L_W; H)$. We define moduli spaces
 \begin{equation*}
 \mathcal{M}(\hat{x}_{-}, \hat{x}_{+};\mathbf{J}):=\Big\{ u\in C^{1}([0,1]\times \R; S(\tilde{Y})) \hspace{0.2cm}|\hspace{0.2cm}
 \substack{\overline{\partial}_{\mathbf{J}}u=0, E(u) <\infty\\ \lim\limits_{s\to \pm \infty}u(t,s)= x_{\pm}(t),\\
 u(0,s) \in \psi^{1}_{H}(\psi^1_{\tilde{f}}(L_P)), u(1,s)\in L_W\\
   \mu(\tilde{x}_{-}) = \mu(u\#\tilde{x}_{+})} \Big\}/\R.
 \end{equation*} 

\subsubsection{Admissible almost complex structure}
\begin{definition}
An almost complex structure $J$ on $S(\tilde{Y})=\R\times\tilde{Y}$ is called cylindrical if $J$ is $\R$-invariant and $J \partial_{\theta}\in T\tilde{Y}$.
\end{definition}
Let $J$ be an almost complex structure on $\tilde{\xi}$ compatible with the two form $d\tilde{\alpha}$. By setting $J \partial_{\theta} = R_{\tilde{\alpha}}$, $J$ extends to a cylindrical almost complex structure $\tilde{J}_{\tilde{\alpha}}$ on $\R\times \tilde{Y}$.
For $(\hat{a}, \hat{m}) \in T_{(a,m)}\R\times \tilde{Y}$, $\tilde{J}_{\tilde{\alpha}}$ is given by 
\begin{equation}\label{J-Cyl}
(\tilde{J})_{(a,m)}(\hat{a}, \hat{m}) = (-\tilde{\alpha}_{(a,m)}(\hat{m}), J_{m}(\pi_{\tilde{\xi}}(\hat{m})) + \hat{a} R_{\tilde{\alpha}}),
\end{equation}
here $\pi_{\tilde{\xi}}: T(\R \times \tilde{Y}) \rightarrow \tilde{\xi}$ is the projection to $\tilde{\xi}$ parallel to $R_{\tilde{\alpha}}$. Such an almost complex structure is called \textit{compatible with the contact form $\tilde{\alpha}$}.

\begin{definition}\label{Def_admJ}
An almost complex structure $J$ on $S(\tilde{Y})$ is called admissible if:
\begin{enumerate}
\item[C1] $J$is cylindrical and restricts to a $d\tilde{\alpha}$-compatible complex structure on the plane bundle $\tilde{\xi}$.
\item[C2] There is a compact set $D \subset \R\times \C$  such that $D =D_0\times D_{1}$ with $B\subset D_{1}$ for $B$ the fixed compact set on section 3.1, choice 1. Moreover,
 $J\vert_{((\R\times \C) \setminus D)\times Y)} = \tilde{J}_{\tilde{\alpha}}$ where the right hand side is $\tilde{\alpha}$-compatible as defined above.
\item[C3] On $((\R\times \C) \setminus D)\times Y)$ the almost complex structure $J$ satisfies that the projection $\pi:(\R\times\C \times Y,J) \rightarrow (\C,i)$, is $(J, j)$-holomorphic.
\end{enumerate}
We denote the space of admissible almost complex structures by $\mathcal{J}_{ad}$.
\end{definition}
Notice that any $J$ compatible almost complex structure on $\xi$ extends to an admissible one. Indeed, since $\tilde{\xi}_{( x, y,p)} = \R\langle\partial_x + y R_{\alpha}\rangle\oplus \R\langle\partial_y \rangle \oplus \xi_p$ then $J$ extends to $\tilde{\xi}$ by setting $J_{(x,y,p)}(\partial_x + y R_{\alpha})= \partial_y$ and $J_{(x,y,p)}(\partial_y)= -(\partial_x + y R_{\alpha})$ and then $\tilde{J}_{\tilde{\alpha}}\in \mathcal{J}_{ad}$ is admissible.
\begin{proposition}
Let $\mathbf{J}=\{J_t\}_{t\in [0,1]} $ be a time dependent family of generic and admissible almost complex structures and let $\hat{x}_{\pm}= [(x_{\pm}, \tilde{x}_{\pm})] \in \hat{\mathcal{I}}_{\star}(\psi^{1}_{\tilde{f}}(L_P),L_W; H)$. The moduli space $\mathcal{M}(\hat{x}_{-}, \hat{x}_{+};\mathbf{J})$ is a manifold of dimension $(|x_-| -|x_+|-1)$. If $(|x_-| -|x_+|-1) = 0$ then $\mathcal{M}(\hat{x}_{-}, \hat{x}_{+};\mathbf{J})$ is a compact manifold. If $(|x_-| -|x_+|-1) = 1$ then $\mathcal{M}(\hat{x}_{-}, \hat{x}_{+};\mathbf{J})$ admits a compactification by gluing broken strips. 
\end{proposition}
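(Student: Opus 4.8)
The plan is to prove the three assertions in the standard order of Floer theory: transversality (so that $\mathcal{M}(\hat{x}_-,\hat{x}_+;\mathbf{J})$ is a smooth manifold of the stated dimension), then compactness for the two lowest index gaps, then gluing. Throughout, the only departure from textbook Floer theory lies in controlling the non-compact directions of $S(\tilde{Y})=\R_\theta\times\C\times Y$, so I single that out as the main point.

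For transversality I would work on the Banach manifold of $W^{1,p}$ maps $u\colon[0,1]\times\R\to S(\tilde{Y})$ satisfying the boundary conditions $u(0,\cdot)\in\psi^1_H(\psi^1_{\tilde{f}}(L_P))$, $u(1,\cdot)\in L_W$ and converging exponentially to $x_\pm$, viewing $\bar\partial_{\mathbf{J}}$ as a section of the corresponding Banach bundle. Its linearization $D_u$ at a solution is Fredholm of index $\mu(\tilde{x}_-)-\mu(\tilde{x}_+)=|x_-|-|x_+|$, computed from the Robbin-Salamon index of the path of Lagrangian boundary conditions; here the asymptotic intersection points are transverse, as arranged in Section \ref{chice} via $H$. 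Since $x_-\neq x_+$ forces every strip to be non-constant, hence somewhere injective in this exact/monotone setting, a Sard-Smale argument over the universal moduli space shows that for a generic family $\mathbf{J}=\{J_t\}$ --- perturbed only on the compact region permitted by admissibility --- the operator $D_u$ is surjective at every solution. Quotienting by the (now free) $\R$-action yields that $\mathcal{M}(\hat{x}_-,\hat{x}_+;\mathbf{J})$ is a smooth manifold of dimension $|x_-|-|x_+|-1$.

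Compactness is where the hypotheses enter and is the step I expect to be hardest. The energy is a priori bounded by the action difference $\mathcal{A}(\hat{x}_-)-\mathcal{A}(\hat{x}_+)$, which depends only on the homotopy class $\star$. The crucial point is a confinement lemma ruling out escape to infinity in $S(\tilde{Y})$. In the $\C$-direction, condition C3 makes the projection $\pi$ to $\C$ holomorphic wherever $u$ leaves the compact set $D$; the two boundary arcs then project to the horizontal rays of $\pi_\C(W)$ (for $L_W$) and to horizontal lines at heights $a_\pm\neq 0$ (for the ends of $\psi^1_{\tilde{f}}(L_P)$, tilted by $f$ precisely so as to miss those rays), so an integrated maximum principle in the spirit of \cite{BiCo} confines $\pi\circ u$ to a bounded region of $\C$. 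In the symplectization direction, the cylindrical form of $J$ at infinity makes $\theta\circ u$ subharmonic there, and hypertightness of both $W$ and the ambient form --- no contractible Reeb orbits, every Reeb chord on $W$ non-trivial in $\pi_1(Y,W)$ --- forbids any SFT-type breaking onto Reeb orbits or chords, so $\theta\circ u$ stays bounded as well. With confinement established, Gromov compactness applies once bubbling is excluded: $\tilde{\omega}=\dd(e^\theta\tilde{\alpha})$ is exact, so there are no non-constant holomorphic spheres, while disk bubbling is absent because $L_P$ carries no non-constant holomorphic disks in the weakly exact case and because $N_P\geq 2$ makes disk bubbling of codimension at least one in the monotone case. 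Hence when $|x_-|-|x_+|-1=0$ the space $\mathcal{M}(\hat{x}_-,\hat{x}_+;\mathbf{J})$ is a compact zero-manifold.

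For $|x_-|-|x_+|-1=1$ the same analysis shows that the only degenerations of a sequence in $\mathcal{M}(\hat{x}_-,\hat{x}_+;\mathbf{J})$ are breakings into two rigid strips through an intermediate generator $\hat{y}=[(y,\tilde{y})]$ with $|x_-|-|y|-1=0$ and $|y|-|x_+|-1=0$. A standard Floer gluing theorem then realizes each such broken configuration as the end of a half-open one-parameter family of genuine strips, so $\mathcal{M}(\hat{x}_-,\hat{x}_+;\mathbf{J})$ compactifies to a one-manifold with boundary exactly the set of broken strips. The transversality and gluing steps are routine; the genuine difficulty, and the place where the hypertight and (weak) exactness or monotonicity hypotheses are indispensable, is the confinement-and-no-bubbling argument that makes the symplectization and $\C$ directions behave as if the target were compact.
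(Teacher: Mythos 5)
Your proposal is correct and follows essentially the same route as the paper: transversality by standard Floer theory, confinement of curves in the $\C$-direction via the holomorphic projection granted by admissibility (the Biran--Cornea style open-mapping/maximum-principle argument), confinement in the symplectization direction via cylindricity of $\mathbf{J}$ plus hypertightness, exclusion of sphere and disk bubbling by exactness of $\tilde{\omega}$ and $L_W$ and by the weakly exact or monotone ($N_P\geq 2$) hypothesis on $L_P$, leaving only broken strips. The only cosmetic difference is that you spell out the transversality, symplectization-direction, and gluing steps directly, where the paper delegates them to Floer, to Eliashberg--Hofer--Salamon's compactness theorem for the compact contact case, and to standard gluing.
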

\begin{proof}
The first claim about transversality of the moduli spaces $\mathcal{M}(\hat{x}_{-}, \hat{x}_{+};\mathbf{J})$ for generic choice of $\mathbf{J}$ goes back to Floer and can be found in Floer \cite[Theorem 4a]{Fl}. 
The proof of compactness is a combination of arguments in Biran-Cornea \cite[Lemma 4.2.1]{BiCo} and Eliashberg-Hofer-Salamon \cite[section 3.9]{EHS}.
In \cite{EHS} the compactness of moduli spaces when the contact manifold is compact is treated. We consider the non-compact manifold $\C\times Y$. If we show that for $\mathbf{J}$ admissible all holomorphic curves of finite energy are contained in the interior of $S(K\times Y)$ where $K \subset \C$ is a compact set, then we are in the same situation as in \cite{EHS} and the compactness follows from \cite[Theorem 3.9.1]{EHS}.

To see that the holomorphic curves under consideration here are contained in $S(K\times Y)$ it is enough to take $K = D_1$ from C2 in Definition \ref{Def_admJ} and use that the map $\pi: (S(\tilde{Y}), \tilde{J})\rightarrow (\C, i)$ is $(\tilde{J}, j)$-holomorphic on $S((\C \setminus K)\times Y)$. This implies that for any $\tilde{J}$-holomorphic curve $u:\Sigma\rightarrow S(\tilde{Y})$ the map $\pi\circ u$ is holomorphic and then its image is contained in some compact set $K$ by \cite[Lemma 4.2.1]{BiCo}.
 
%

The condition C1 of a cylindrical almost complex structure guarantees that no sequence of holomorphic strips can escape to the convex end, this in addition to the hypertight setting, guarantees no escape to the concave end.
Moreover, no sphere bubbling can happen since the symplectic manifold is exact, and disk bubbling is impossible on $L_{W}$ because it is exact and on $L_{P}$ because it is weakly exact or monotone with $N_{L}\geq 2$.  Then the only configurations that can appear in the compactified moduli spaces are broken trajectories.
\end{proof}


The differential is defined by 
 \begin{equation}
     d(\hat{x}_{-}) = \sum\limits_{\substack{|\hat{x}_{+}|= |\hat{x}_{-}|-1}} \sum\limits_{u\in \mathcal{M}(\hat{x}_{-}, \hat{x}_{+};\mathbf{J})} T^{\omega(u)}  \hat{x}_{+}
 \end{equation}

Once the compactness of the moduli spaces is ensured, it is standard to show that $d^{2}=0$. The reader can check for example \cite[Lema 3.2]{Fl1}.
Finally, we set $$CF(L_P,L_W;H,f,\mathbf{J}) := \oplus_{\star \in \pi_{0}(\mathcal{P}(\psi^1_H(\psi^{1}_{\tilde{f}}(L_P)), L_W))} CF_{\star}(L_P,L_W;H,f,\mathbf{J})$$
\begin{remark} In a similar way, the Floer complex $CF(\hat{P},L_{\Lambda}, H, J)$ for a pair $(P, \Lambda)$ consisting of a closed weakly exact or monotone pre-Lagrangian and a closed hypertight Legendrian of $(Y,\xi)$, can be defined. In this case $\hat{P},L_{\Lambda}\subset (S(Y),\omega)$, and since $Y$ is compact, the Floer complex consider here is a minor modification from the one defined in \cite{EHS}. The difference being that here we decided to consider the Novikov ring and all connected components of the path space. We will denote the homology of this complex by $HF(P,\Lambda)$.
\end{remark}
\begin{proposition}The homology of the complex $CF(L_P,L_W;H,f,\mathbf{J})$ is well defined and it is independent on the choice of compactly supported $H$ and generic and admissible $\mathbf{J}$ and depends on the sign of the function $f$ outside the compact set $B$. We denote this homology by $HF(L_P,L_W; [f])$.
\end{proposition}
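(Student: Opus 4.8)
The plan is to establish all three invariance statements at once by the standard continuation-map technique, carried out while respecting the constraints that the Hamiltonian stay compactly supported, the almost complex structure stay admissible, and the sign of $f$ outside $B$ stay fixed. Given two admissible triples $(H^0,f^0,\mathbf{J}^0)$ and $(H^1,f^1,\mathbf{J}^1)$ with $f^0$ and $f^1$ of the same asymptotic sign, I would first fix a homotopy $(H^s,f^s,\mathbf{J}^s)_{s\in\R}$ that equals the first triple for $s\ll 0$ and the second for $s\gg 0$, with each $H^s$ compactly supported in $\R\times\pi_{\C}^{-1}(B)$, each $\mathbf{J}^s\in\mathcal{J}_{ad}$, and each $f^s$ of the prescribed form with slopes of the fixed sign. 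To this I associate the parametrized moduli space of finite-energy maps $u:\R\times[0,1]\to S(\tilde{Y})$ solving the $s$-dependent Floer equation with moving boundary conditions $u(s,0)\in\psi^1_{H^s}(\psi^{1}_{\tilde{f}^s}(L_P))$, $u(s,1)\in L_W$, and fixed asymptotics $\hat{x}_\pm$ at $s\to\pm\infty$; counting the rigid (index-zero) elements weighted by $T^{\omega(u)}$ defines a continuation homomorphism $\Phi:CF(L_P,L_W;H^0,f^0,\mathbf{J}^0)\to CF(L_P,L_W;H^1,f^1,\mathbf{J}^1)$ over $\mathbf{\Lambda}$.

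Second, I would check that $\Phi$ is a chain map by the usual boundary analysis of the one-dimensional parametrized moduli spaces, whose ends are a rigid continuation solution glued to a genuine Floer trajectory at one end. To see that $\Phi$ is a quasi-isomorphism I would run the reverse homotopy to build a map $\Psi$ in the opposite direction, and then use a homotopy-of-homotopies argument: concatenating the two homotopies and deforming the concatenation to the constant homotopy produces a chain homotopy between $\Psi\circ\Phi$ and the identity, and symmetrically $\Phi\circ\Psi\simeq\mathrm{id}$, so $\Phi$ is an isomorphism on homology. The resulting group therefore depends only on the homotopy class of the data subject to the stated constraints, which is exactly the sign of $f$ outside $B$; this yields the claimed $HF(L_P,L_W;[f])$. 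Note that the sign is a genuine invariant here: passing through vanishing slopes would fail to push the cylindrical ends of $L_P$ off $L_W$ at infinity, so homotopies are only admissible within a fixed sign.

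The main obstacle, as for the differential, is compactness of the continuation moduli spaces in the non-compact manifold $S(\tilde{Y})$, which now must hold \emph{uniformly} along the homotopy. I would reprove the three confinement statements from the preceding proposition, checking uniformity in $s$: since every $\mathbf{J}^s$ is admissible, the projection $\pi$ is $(\mathbf{J}^s,j)$-holomorphic outside $D$, so $\pi\circ u$ is holomorphic there and its image is confined to the fixed compact $K=D_1\subset\C$ by the maximum-principle argument of \cite[Lemma 4.2.1]{BiCo}; condition C1 blocks escape to the convex end; and the fixed hypertight form $\tilde{\alpha}$ blocks escape to the concave end. Because the sign of the slopes of $f^s$ is constant, the cylindrical ends of $\psi^{1}_{\tilde{f}^s}(L_P)$ stay disjoint from $L_W$ outside $\R\times B\times Y$ for all $s$, so no intersection point escapes along the homotopy; bubbling is excluded uniformly by the same exactness and monotonicity reasons as before ($L_W$ exact, $L_P$ weakly exact or monotone with $N_{L_P}\geq 2$, ambient form exact). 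The one genuinely new point is the a priori energy estimate, where a curvature term $\int_{\R}\!\int_0^1 (\partial_s H^s)\,\dd t\,\dd s$ enters; since the homotopy is stationary outside a compact $s$-interval and uniformly compactly supported, this term is bounded, the estimate $E(u)\le \mathcal{A}(\hat{x}_-)-\mathcal{A}(\hat{x}_+)+C$ holds, and the Novikov completion then guarantees $\Phi$ is well defined because only finitely many homotopy classes contribute below any energy level. With these uniform bounds in hand, compactness of the continuation spaces reduces to \cite[Theorem 3.9.1]{EHS} exactly as in the preceding proposition.
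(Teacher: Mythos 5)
Your architecture (continuation maps with moving boundary conditions, homotopy-of-homotopies, compactness via C1--C3 plus hypertightness, Novikov weights) matches the paper's treatment of the $H$- and $\mathbf{J}$-invariance, and those parts are sound. The genuine gap is in the $f$-invariance, and it is precisely the point that forces the paper into a different argument. Two admissible functions $f^0,f^1$ of the same sign will in general have different slopes $a_\pm$ and constants $b_\pm$, so your homotopy $f^s$ must move them; but then the boundary condition $\psi^1_{\tilde{f}^s}(L_P)$ moves \emph{at infinity}: its cylindrical ends sweep the family of horizontal rays $\{y=a^s_\pm\}$. Two things then break. First, the confinement argument you invoke from \cite[Lemma 4.2.1]{BiCo} is an open-mapping/finite-area argument that needs the boundary values of the holomorphic projection $\pi\circ u$ near infinity to lie on finitely many \emph{fixed} rays; when the slopes vary with $s$, the swept rays fill a two-dimensional band, and ``the image is an open set with boundary on the rays, hence bounded'' gives no control of the $x$-coordinate inside that band. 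Second, your energy bound rests on the homotopy being ``uniformly compactly supported,'' which is false for the $f$-part: the Hamiltonian generating the motion of the boundary is of the form $e^{\theta}(\partial_s f^s\circ\pi_{\C})$, which grows linearly in $x$ whenever the slopes vary, so the curvature term is bounded only if one already knows the curves stay in a compact region --- exactly what is in question. The two pillars of your compactness argument are thus circular in precisely the regime your homotopy creates.

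The paper avoids this by never moving the ends during a homotopy: given $f$ and $f_1$ of the same sign, it chooses an auxiliary $f_2$ with $f_2=f_1$ on the large compact set $B$ and $f_2=f$ outside $2B$, creating no new intersection points. The linear homotopy $g_\tau=\tau f_2+(1-\tau)f$ is then constant outside $2B$, so the moving-boundary Hamiltonian is supported in a compact region of the $\C$-direction, the boundary rays at infinity stay fixed, and the moving-boundary continuation map is compact by the same C1/C3/hypertightness package; the remaining comparison of $f_2$ with $f_1$ costs nothing, because the two perturbed Lagrangians --- and hence, by confinement, all relevant intersection points and holomorphic strips --- coincide once $B$ is taken large enough. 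To salvage your one-step continuation you would have to either restrict the homotopies $f^s$ to be constant outside a compact set (which forces the paper's two-step reduction anyway, since generic $f^0,f^1$ do not agree at infinity) or prove a new confinement lemma for boundary conditions whose ends sweep a band; as written, the proof has a gap.
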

\begin{proof}
The proof of this proposition follows the standard arguments. The only difference here is the non-compactness of $\tilde{Y}$ and then we only need to justify the compactness of the moduli spaces involved in the proofs. For the invariance under choice of compactly supported $H$ a chain map can be constructed using moving boundary conditions so that the moduli spaces of pseudo-holomorphic curves with moving boundary conditions will be compact by the choice of $\mathbf{J}$ in Definition \ref{Def_admJ}; C1 guarantees that no curves escape to the convex end, C3 that no curves escape to the $\C$ direction and the hypertight condition guaranties that no curve escape to the concave end. The invariance under change of almost complex structure follows from \cite[Theorem 3.7.3]{EHS}. For the invariance under change of function $f$, let  $f_{1}$ be another function satisfying choice 2 in \ref{chice} and with the same sign as $f$ outside $B$. To prove the invariance we use an auxiliary function $f_2$ so that $f_1$ and $f_2$ coincide inside the compact $B$
and so that $f$ and $f_2$ coincide outside a bigger compact, say $2B$, additionally $f_2$ should not
create any new intersection points. Then one can use a homotopy $g_\tau=\tau f_2 + (1-\tau)f$ and construct a chain map between the corresponding complexes using moving boundary conditions. And do similarly for $f_1$ and $f_2$. The coincidence of $f$ and $f_2$ outside $2B$ means that the moving boundary argument applies
(because the ends associated to $g_{\tau}$ remain constant to those of $f$ outside of $2B$).
The coincidence of $f_1$ and $f_2$ inside $B$ means that, by taking $B$ big enough, the complexes
of $f_1$ and $f_2$ coincide.
\end{proof}

\subsection{Invariance under non-compactly supported Hamiltonian perturbations} 

\subsubsection{Admissible Hamiltonian isotopies}
Let $H:[0,1]\times S(\tilde{Y})\rightarrow \R$ be a time dependent Hamiltonian and denote by $\psi^{t}$ its Hamiltonian flow.  We call $H$ admissible if: 
\begin{itemize}
 \item There is a constant $K>0$ such that $H$ has support on $[0,1]\times [-K, K] \times \tilde{Y}$.
\item There exist a compact set $B'$ containing the fixed compact set $B$ from \ref{chice}, $B\subset B'$, such that for all $t\in [0,1]$ and $q \in L_{V}\cap \pi^{-1}(\C\setminus B)$ we have  $\psi^{t}(q) \in L_{V}\cap\pi^{-1}(\C\setminus B')$, where $V\in\{P,W\}$.

\end{itemize}
\begin{remark}
An example of such a Hamiltonian isotopy is the Hamiltonian isotopy obtained by lifting the contact isotopy associated to a translation 
along the $x$-axis, by means of a suitable cutoff.
\end{remark}

\begin{proposition}\label{invarianza}
 The homology of the complex $CF(L_{P},L_ W; H, f, \textbf{J})$ is invariant under admissible Hamiltonian isotopies $\{\psi^{t}\}_{t\in[0,1]}$:
 $$HF(L_P,L_W;[f])\cong HF(\psi_{1}(L_P), L_W;[f]).$$

\end{proposition}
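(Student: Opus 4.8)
The plan is to construct a chain-level continuation map by the same moving boundary condition technique already invoked in the proof of the preceding invariance proposition, and to show that, exactly as there, the only point genuinely requiring the hypertight and admissibility hypotheses is the compactness of the relevant moduli spaces in the non-compact total space $S(\tilde{Y})$. Throughout I treat $L_P$ and $\psi_1(L_P)$ as carrying their fixed background perturbation data $(H,f)$, so that both complexes $CF(L_P,L_W;[f])$ and $CF(\psi_1(L_P),L_W;[f])$ are already transverse and well defined by Proposition \ref{invarianza}'s predecessor.

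First I would fix a smooth nondecreasing cutoff $\beta:\R\to[0,1]$ with $\beta(s)=0$ for $s\le -1$ and $\beta(s)=1$ for $s\ge 1$, and consider the $s$-dependent Lagrangian boundary condition $\{\psi^{\beta(s)}(L_P)\}_{s\in\R}$, which interpolates between $L_P$ as $s\to-\infty$ and $\psi_1(L_P)$ as $s\to+\infty$. For generators $\hat{x}_-\in\hat{\mathcal{I}}_{\star}(\,\cdot\,;L_P,L_W)$ and $\hat{x}_+\in\hat{\mathcal{I}}_{\star}(\,\cdot\,;\psi_1(L_P),L_W)$ I define a moduli space of finite-energy $\mathbf{J}$-holomorphic strips $u:\R\times[0,1]\to S(\tilde{Y})$ satisfying $\overline{\partial}_{\mathbf{J}}u=0$, with $u(s,0)\in\psi^{\beta(s)}(L_P)$ and $u(s,1)\in L_W$, asymptotic to $x_-$ and $x_+$ as $s\to\mp\infty$, and with the Maslov index constraint built in exactly as in $\mathcal{M}(\hat{x}_-,\hat{x}_+;\mathbf{J})$. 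Counting the index-zero solutions, weighted by $T^{\omega(u)}$, defines the candidate map $\Phi:CF(L_P,L_W;[f])\to CF(\psi_1(L_P),L_W;[f])$.

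The key step, and the one I expect to be the main obstacle, is compactness of these moving-boundary moduli spaces, since $\tilde{Y}$ is non-compact and the boundary condition is itself now $s$-dependent. This is precisely where admissibility of $\{\psi^t\}$ is used: because $H$ is supported in $[0,1]\times[-K,K]\times\tilde{Y}$ and $\psi^t$ preserves $L_P$ setwise on $\pi^{-1}(\C\setminus B)$, sending it into $\pi^{-1}(\C\setminus B')$, the moving condition $\psi^{\beta(s)}(L_P)$ coincides with the fixed cylindrical end of $L_P$ outside a compact subset of $\C$, uniformly in $s$. Consequently the projection $\pi\circ u$ is $j$-holomorphic by C3 and, outside $B'$, satisfies a boundary condition along the real axis, so the open mapping and maximum principle argument of Biran--Cornea \cite[Lemma 4.2.1]{BiCo} confines its image to a compact $K\subset\C$. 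With the strips thus contained in $S(K\times Y)$, the remaining sources of non-compactness are ruled out exactly as in the compactness Proposition above: cylindricity of $J$ (C1) prevents escape to the convex end, the hypertight condition prevents escape to the concave end, and exactness of $L_W$ together with the weak exactness or monotonicity with $N_P\ge 2$ of $L_P$ prevents disk and sphere bubbling. Hence the compactified index-one spaces consist only of broken strips.

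With compactness secured the remaining steps are standard. Transversality of the moving-boundary moduli spaces for generic $\mathbf{J}$ follows as in Floer \cite[Theorem 4a]{Fl}; the index-zero count is then finite and degree-preserving once the grading is tracked through the capping paths, so $\Phi$ is well defined, and the signed $\Z_2$-count of the boundary of the compactified index-one spaces shows that $\Phi$ is a chain map. I would then build the reverse continuation map $\Psi$ from the reversed isotopy and the reflected cutoff $\beta(-s)$, and prove that $\Psi\circ\Phi$ and $\Phi\circ\Psi$ are chain homotopic to the respective identities by the usual homotopy-of-homotopies argument, interpolating the concatenated moving boundary condition to the constant one and counting the resulting moduli spaces; all of these auxiliary spaces enjoy the same compactness for the same reasons, since the interpolation stays admissible at infinity. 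This yields the desired isomorphism $HF(L_P,L_W;[f])\cong HF(\psi_1(L_P),L_W;[f])$. I expect the only delicate verification to be that the $\C$-confinement of $\pi\circ u$ survives the $s$-dependence of the boundary condition, which is exactly why the second admissibility bullet is phrased as preservation of $L_V$ on the ends.
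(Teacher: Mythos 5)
Your proposal takes essentially the same route as the paper: a Biran--Cornea-style continuation map defined by moving boundary conditions, with the whole argument reduced to compactness of the moving-boundary moduli spaces, which is secured exactly as you say by the admissibility of $\{\psi^t\}$ and of $\mathbf{J}$ (confinement of $\pi\circ u$ via \cite[Lemma 4.2.1]{BiCo}, C1 against escape to the convex end, hypertightness against the concave end) plus exactness/monotonicity against bubbling. The only cosmetic differences are that the paper weights the count by $T^{\omega(u)-\int_{\R\times\{0\}} H_{\beta(s)}(u(s,0))\,ds}$ rather than $T^{\omega(u)}$, and delegates the reverse map and chain homotopies to \cite[Proposition 4.3.1]{BiCo}, which you spell out explicitly.
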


\begin{proof}
The proof of this statement is similar to the proof of the analogous statements for Lagrangian cobordisms in \cite[Proposition 4.3.1]{BiCo}. It consists in constructing a chain map using moving boundary conditions:
\begin{align*}
c:CF(L_{P},L_ W; H, f, \textbf{J})&\rightarrow CF(\psi^{1}(L_P), L_W; \psi^1\circ H\circ(\psi^1)^{-1},\psi^1\circ f\circ(\psi^1)^{-1},  \textbf{J}),\\
\hat{x} &\mapsto c(\hat{x})= \sum \limits_{\tilde{y}}\sum\limits_{u\in \mathcal{M}_{\beta}(\hat{x}, \hat{y}; \textbf{J})} T^{\omega(u) - \int\limits_{\R\times\{0\}} H_{\beta(s)}(u(s,0))ds}\hat{y},    
\end{align*}
where $\mathcal{M}_{\beta}(\hat{x}, \hat{y}; \textbf{J})$ denotes the following moduli space of $\textbf{J}$-holomorphic maps with moving boundary condition. 
Let $\beta: \R \rightarrow [0, 1]$ be a smooth function with $\beta(s) = 0$ for $s \leq 0$, $\beta(s) = 1$ for $s \geq 1$ and  $\dot{\beta}(s) > 0 \text{ on }(0,1)$. Thus, $u\in \mathcal{M}_{\beta}(\hat{x}, \hat{y}; \textbf{J})$ if:
\begin{enumerate}
\item $\overline{\partial}_{\textbf{J}}u =0,$
\item $u(s, 0) \in \psi^{\beta(s)(1-t)}(\psi^{1}_H(\psi^1_{\tilde{f}}((L_P)),$
\item $u(s,1) \in L_W,$
\item it has finite energy,
\item $\mu(\tilde{x})=\mu(u\#\tilde{y}).$
\end{enumerate}
The only difference with the setting in \cite[Proposition 4.3.1]{BiCo} is the possibility of losing compactness along the $\theta$ direction of the symplectization.
Notice that by conditions C1, C2 for admissible almost complex structure $\textbf{J}$, no $\textbf{J}$-holomorphic curve can escape to $\pm \infty$. Therefore the moduli spaces with moving boundary condition $ \mathcal{M}_{\beta}(\hat{x}, \hat{y}; \textbf{J})$ are compact. Once this is ensured, the proof of \cite[Proposition 4.3.1]{BiCo} adapts to this setting.
\end{proof}
\section{Proof of Theorem 1}

\begin{theorem}
Let $(Y,\xi)$ be a closed hypertight contact manifold and $\Lambda, \Lambda'\subset Y$ a pair of hypertight Legendrian submanifolds related by a connected hypertight Legendrian cobordism $(W;\Lambda, \Lambda')\subset (\C \times Y,\tilde{\xi})$.  If $P\subset Y$ is a closed weakly exact or monotone  with $N_P \geq 2$ pre-Lagrangian submanifold 
then $$HF(L_P,L_{\R\times \{0\}\times \Lambda};[f]) \cong  HF(L_P,L_{\R\times \{0\}\times \Lambda'};[f]).$$
\end{theorem}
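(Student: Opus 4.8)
The plan is to follow the Biran--Cornea strategy \cite{BiCo} and compute the cobordism Floer homology $HF(L_P,L_W;[f])$ by localizing it to each of the two cylindrical ends of $W$, where $W$ is a trivial cylinder and the problem reduces to the ordinary Floer homology of the corresponding boundary pair. Applying the same reduction to the two trivial cobordisms $\R\times\{0\}\times\Lambda$ and $\R\times\{0\}\times\Lambda'$ (each of whose ends carries a single Legendrian), the statement will follow from the chain
\begin{equation*}
HF(L_P,L_{\R\times\{0\}\times\Lambda};[f])\;\cong\; HF(P,\Lambda)\;\cong\; HF(L_P,L_W;[f])\;\cong\; HF(P,\Lambda')\;\cong\; HF(L_P,L_{\R\times\{0\}\times\Lambda'};[f]),
\end{equation*}
in which $HF(P,\Lambda)$ and $HF(P,\Lambda')$ are the Floer homologies in $S(Y)$ defined in the Remark of Section 3. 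Thus the heart of the matter is the central isomorphism $HF(P,\Lambda)\cong HF(P,\Lambda')$, i.e. Theorem 1, realized through the single complex $CF(L_P,L_W;H,f,\mathbf{J})$.

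To obtain $HF(L_P,L_W;[f])\cong HF(P,\Lambda)$ I would first use condition C3 of Definition \ref{Def_admJ}: since $\pi\colon(S(\tilde Y),J)\to(\C,j)$ is $(J,j)$-holomorphic off the compact set $D$, every finite-energy strip $u\in\mathcal{M}(\hat{x}_-,\hat{x}_+;\mathbf{J})$ has holomorphic projection $\pi\circ u$, with boundary on $\pi_{\C}(\psi^{1}_{\tilde f}(L_P))$ and on $\pi_{\C}(W)$. A maximum-principle argument as in \cite[Lemma 4.2.1]{BiCo} then confines $\pi\circ u$, and hence $u$, over a neighbourhood of the ends. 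By Proposition \ref{invarianza} I am free to move $L_P$ by an admissible Hamiltonian isotopy --- for instance a cutoff translation along the $x$-axis, as in the Remark preceding Proposition \ref{invarianza} --- so as to push every intersection point of $\psi^{1}_{\tilde f}(L_P)$ with $L_W$, together with all connecting strips, into the region where $W=(-\infty,\epsilon)\times\{0\}\times\Lambda$ is the trivial cylinder. Over that region $L_W=\R\times(-\infty,\epsilon)\times\{0\}\times\Lambda$ is a product and, the almost complex structure being cylindrical and compatible with the $\C$-factor, the Floer equation decouples: the $\C$-component is a holomorphic strip on a half-plane with totally real boundary, rigid and of index zero, while the $Y$-component is precisely the Floer equation for $(\hat{P},L_\Lambda)$ in $S(Y)$ with a Reeb perturbation governed by the slope of $f$. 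This yields a bijection between the generators and rigid strips of $CF(L_P,L_W;H,f,\mathbf{J})$ and those of $CF(\hat{P},L_\Lambda;H,J)$, hence the isomorphism; exchanging the roles of the two ends (available because $W$ is connected) gives $HF(L_P,L_W;[f])\cong HF(P,\Lambda')$ in the same way.

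The outer isomorphisms of the chain are the special case of this localization applied to the trivial cobordisms, for which both ends reduce to the same $HF(P,\Lambda)$ (resp. $HF(P,\Lambda')$) independently of $[f]$; combining the four isomorphisms proves the theorem. I expect the main obstacle to be this localization-and-reduction step: confining the strips to a single end requires a maximum principle in the noncompact symplectization, to be combined with the no-escape statements already established from cylindricity (C1) and hypertightness; and the product splitting over the end, together with the matching of the Maslov indices used to define $\hat{\mathcal{I}}_{\star}$, must be checked carefully so that the localized complex agrees with $CF(\hat{P},L_\Lambda;H,J)$ on the nose. Compactness and the absence of sphere and disk bubbling are already guaranteed by the earlier compactness proposition (weak exactness, or monotonicity with $N_P\ge 2$), so they may be invoked directly.
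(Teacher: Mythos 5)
Your proposal takes essentially the same route as the paper's own proof: the paper also works with the single complex $CF(L_P,L_W;H,f,\mathbf{J})$, chooses $f$ (explicitly $f(x,y)=\beta(x)ax$) so that the perturbed $L_P$ meets $L_W$ only over one cylindrical end where the complex reduces to that of $(\hat{P},L_{\Lambda})$, and then uses the cutoff $x$-translation as an admissible Hamiltonian isotopy together with Proposition \ref{invarianza} to transfer the computation to the other end, giving $HF(P,\Lambda)\cong HF(L_P,L_W;[f])\cong HF(P,\Lambda')$. Your more detailed description of the localization step (confinement of the holomorphic projection and the fiberwise identification with the Floer equation in $S(Y)$) is precisely the part the paper leaves implicit, so the two arguments agree in substance.
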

\begin{proof}
The proof consists in first choosing special data $(H,f, \textbf{J})$ to define the Floer complex $CF(L_P,L_{W}; H, f, \textbf{J})$ and then finding suitable admissible Hamiltonian isotopy. 
Let $B \subset \C$ be such that $W$ is cylindrical outside $\pi_{\C}^{-1}(B)$, and $H$ a Hamiltonian with support on 
$[-K,K]\times B \times Y$ for $K>0$.
Let be $f(x,y) = \beta(x)ax$ where $a>0$ and for some $R>0$ such that $B\subset [-R, R]\times[-R,R]$ the smooth function $\beta:\R\rightarrow\R$ satisfies
$\beta(x) = 1$ on $(-\infty, -R]$, 
$\beta(x) = -1$ on $[R, \infty)$ 
and
$\dot{\beta}(x) <0$. 
Such an $f$ can be chosen such that $\pi_{\C}(L_P)$ looks like
\begin{center}
   \includegraphics[scale=0.70]{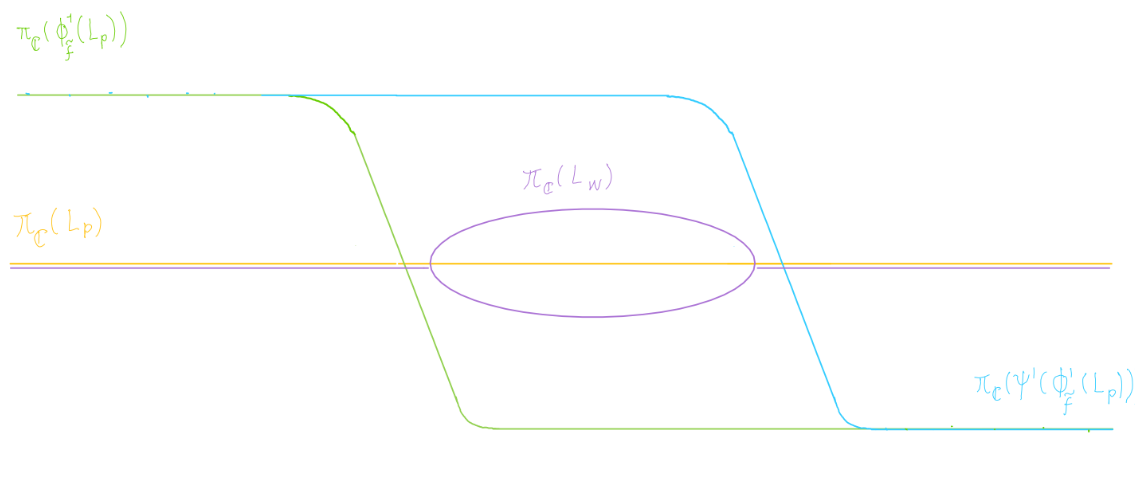}
\end{center}

The result follow from the fact that the translation on $x$, $T_{t}:\C \rightarrow \C$, $(x,y)\mapsto (x+t, y)$, induces a contact Hamiltonian on $\C\times Y$ that lifts to an admissible Hamiltonian isotopy, denoted by $\psi^t$, defined on a neighborhood of $L_P$ by the lift of the contact isotopy $\psi_{T_{t}\circ\pi_{\C}}$ and with support on a slightly bigger neighborhood.  Then from Proposition \ref{invarianza} 
$$HF(L_P,L_{W}; [f])\cong HF(\psi_{1}(L_P), L_W;[f]).$$
The right hand side of the equality is isomorphic to $HF(P,\Lambda)$ and the left hand side to $HF(P,\Lambda')$.
\end{proof}

\bibliographystyle{plain}
\bibliography{journal}
\end{document}